\newtheorem{thm}{Theorem}[section]
\newtheorem{lem}[thm]{Lemma}
\newtheorem{rem}[thm]{Remark}
\numberwithin{equation}{section}
\title{Higher order Painlev\'{e} system of type $D^{(1)}_{2n+2}$ and monodromy preserving deformation}
\author{Kenta Fuji \thanks{Department of Mathematics, Kobe University, 1-1, Rokkodai, Nada-ku, Kobe 657-8501, Japan. E-mail: fuji@math.kobe-u.ac.jp} \and Keisuke Inoue \and Keisuke Shinomiya \and Takao Suzuki \thanks{Department of Mathematics, Kinki University, 3-4-1, Kowakae, Higashi-Osaka, Osaka 577-8502, Japan. E-mail: suzuki@math.kindai.ac.jp}}
\date{}
\begin{document}

\maketitle

\begin{abstract}
The higher order Painlev\'{e} system of type $D^{(1)}_{2n+2}$ was proposed by Y. Sasano as an extension of $P_{\rm{VI}}$ for the affine Weyl group symmetry with the aid of algebraic geometry for Okamoto initial value space.
In this article, we give it as the monodromy preserving deformation of a Fuchsian system.

Key Words: Painlev\'{e} system, Schlesinger system, Laplace transformation.

2000 Mathematics Subject Classification: 34M55, 44A10.
\end{abstract}

\section{Introduction}

The main object in this article is the higher order Painlev\'{e} system of type $D^{(1)}_{2n+2}$ \cite{Sas}; we call it {\it a Sasano sysytem}.
It is expressed as a Hamiltonian system on $\mathbb{P}^1(\mathbb{C})$
\begin{equation}\begin{split}\label{Eq:CP6}
	&s(s-1)\frac{dq_i}{ds} = \frac{\partial H}{\partial p_i},\quad
	s(s-1)\frac{dp_i}{ds} = -\frac{\partial H}{\partial q_i}\quad (i=1,\ldots,n),\\
	&H = \sum_{i=1}^{n}H_i + \sum_{1\leq i<j\leq n}2(q_i-s)p_iq_j\{(q_j-1)p_j+\alpha_{2j}\},
\end{split}\end{equation}
where $H_i=H_i[q_i,p_i;\kappa_{i,s},\kappa_{i,1},\kappa_{i,0},\kappa_{i,\infty};s]$ is the Hamiltonian for $P_{\rm{VI}}$ defined by
\[\begin{split}
	H_i &= q_i(q_i-1)(q_i-s)p_i^2 - (\kappa_{i,s}-1)q_i(q_i-1)p_i\\
	&\quad - \kappa_{i,1}q_i(q_i-s)p_i - \kappa_{i,0}(q_i-1)(q_i-s)p_i + \alpha_{2i}(\alpha_{2i}+\kappa_{i,\infty})q,
\end{split}\]
and
\begin{equation}\begin{split}\label{Eq:CP6_Param}
	&\kappa_{i,s} = \alpha_1 + \sum_{j=1}^{i-1}\alpha_{2j+1},\quad
	\kappa_{i,1} = \sum_{j=i}^{n-1}\alpha_{2j+1} + \sum_{j=i+1}^{n}2\alpha_{2j} + \alpha_{2n+1},\\
	&\kappa_{i,0} = \sum_{j=i}^{n-1}\alpha_{2j+1} + \alpha_{2n+2},\quad
	\kappa_{i,\infty} = \alpha_0 + \sum_{j=1}^{i-1}2\alpha_{2j} + \sum_{j=1}^{i-1}\alpha_{2j+1}.
\end{split}\end{equation}
The fixed parameters $\alpha_0,\ldots,\alpha_{2n+2}$ satisfy a relation $\alpha_0+\alpha_1+\sum_{j=2}^{2n}2\alpha_j+\alpha_{2n+1}+\alpha_{2n+2}=1$.

The system \eqref{Eq:CP6} was proposed as an extension of $P_{\rm{VI}}$ for the affine Weyl group symmetry with the aid of algebraic geometry for Okamoto initial value space.
It was also given as the compatibility condition of the Lax pair associated with a loop algebra $\mathfrak{so}_{4n+4}[z,z^{-1}]$ \cite{FS1}.
But the relationship with the monodromy preserving deformation of a Fuchsian system has not been clarified.
The aim of this article is to investigate it.

Recently, higher order generalizations of $P_{\rm{VI}}$ has been studied from a viewpoint of the monodromy preserving deformations of Fuchsian systems.
It is shown in \cite{Kos,O} that any irreducible Fuchsian system can be reduced to finite types of systems by using Katz's two operations, addition and middle convolution \cite{Kat}.
It is also shown in \cite{HF} that the isomonodromy deformation equation is invariant under Katz's two operations.
Those fact allows us to construct a classification theory of the isomonodromy deformation equation.

The Fuchsian systems with two accessary parameters are classified by Kostov \cite{Kos}.
According to it, they are reduced to the systems with the following spectral types:
\[\begin{array}{l|lll}
	\text{4 singularities}& 11,11,11,11\\\hline
	\text{3 singularities}& 111,111,111& 22,1111,1111& 33,222,111111
\end{array}\]
The system with the spectral type $\{11,11,11,11\}$ gives $P_{\rm{VI}}$ as the monodromy preserving deformation.
Note that the other three systems have no deformation parameters.

In general, the Fuchsian systems can be classified with the aid of algorithm proposed by Oshima \cite{O}.
The systems with four accessary parameters are reduced as follows:
\[\begin{array}{l|lll}
	\text{5 singularities}& 11,11,11,11,11\\\hline
	\text{4 singularities}& 21,21,111,111& 31,22,22,1111& 22,22,22,211\\\hline
	\text{3 singularities}& 211,1111,1111& 221,221,11111& 32,11111,111111\\
	& 222,222,2211& 33,2211,111111& 44,2222,22211\\
	& 44,332,11111111& 55,3331,22222& 66,444,2222211
\end{array}\]
The system with $\{11,11,11,11,11\}$ corresponds to the Garnier system in two variables \cite{G}.
And the systems with four singularities correspond to four-dimensional Painlev\'{e} equations, which are investigated by Sakai \cite{Sak}.
Among them, the system with $\{31,22,22,1111\}$ corresponds to the system \eqref{Eq:CP6} of the case $n=2$.
In this article, we consider its natural extension.
Namely, we consider the Fuchsian system with the spectral type $\{(n,n),(n,n),(2n-1,1),(1^{2n})\}$ and show that its monodromy preserving deformation gives the system \eqref{Eq:CP6}.

\begin{rem}
The choice of a spectral type $\{(n,n),(n,n),(2n-1,1),(1^{2n})\}$ is suggested by the recent work of Oshima {\rm\cite{O}}.
According to it, a Fuchsian system with this spectral type corresponds to a Kac-Moody root system with the following Dynkin diagram{\rm:}
\[
	\begin{picture}(140,65)
		\put(0,30){\circle{6}}\put(0,30){\circle*{1}}\put(5,35){\small$1$}
		\put(3,30){\line(1,0){20}}
		\put(26,4){\circle{6}}\put(31,9){\small$n$}
		\put(26,7){\line(0,1){20}}
		\put(26,56){\circle{6}}\put(31,61){\small$n$}
		\put(26,33){\line(0,1){20}}
		\put(26,30){\circle{6}}\put(31,35){\small$2n$}
		\put(29,30){\line(1,0){20}}
		\put(52,30){\circle{6}}\put(57,35){\small$2n-1$}
		\put(55,30){\line(1,0){15}}
		\multiput(70,30)(4,0){5}{\line(1,0){2}}
		\put(90,30){\line(1,0){15}}
		\put(108,30){\circle{6}}\put(113,35){\small$2$}
		\put(111,30){\line(1,0){20}}
		\put(134,30){\circle{6}}\put(139,35){\small$1$}
	\end{picture}
\]
A dotted circle represents a simple root which is not orthogonal to the othet roots.
\end{rem}

\begin{rem}
The Fuchsian system with the spectral type $\{21,21,111,111\}$ corresponds to the fourth order Painlev\'{e} system given in {\rm\cite{FS2}}.
Furthermore the system with the spectral type $\{(n,1),(n,1),(1^{n+1}),(1^{n+1})\}$ is systematically investigated by Tsuda.
It corresponds to the Schlesinger system $\mathcal{H}_{n+1,1}$ given in {\rm\cite{T}}, or equivalently, the higher order Painlev\'{e} system given in {\rm\cite{Su}}.
\end{rem}

The other aim of this article is to investigate a relationship between two origins of the Sasano system, the Lax pair associated with $\mathfrak{so}_{4n+4}[z,z^{-1}]$ and the Fuchsian system with the spectral type $\{(n,n),(n,n),(2n-1,1),(1^{2n})\}$.
It is suggested that those two linear systems are related via a Laplace transformation.
In this article, we show it for the case $n=2$.

This article is organized as follows.
In Section \ref{Sec:Sch}, we introduce a Fuchsian system with the spectral type $\{(n,n),(n,n),(2n-1,1),(1^{2n})\}$ and its monodromy preserving deformation.
In Section \ref{Sec:Main_Thm}, the system \eqref{Eq:CP6} is derived from the Schlesinger system given in Section \ref{Sec:Sch}.
In Section \ref{Sec:Laplace}, we clarify a relation between two linear systems for the fourth order Sasano system with the aid of a Laplace transformation.

\section{Schlesinger system}\label{Sec:Sch}

In this section, following \cite{JMU,Sak}, we introduce a Fuchsian system with the spectral type $\{(n,n),(n,n),(2n-1,1),(1^{2n})\}$ and its monodromy preserving deformation.

Consider a system of linear differential equations on $\mathbb{P}^1(\mathbb{C})$
\begin{equation}\label{Eq:Sch_Lax}
	\frac{\partial}{\partial x}Y(x) = \left(\frac{A_t}{x-t}+\frac{A_1}{x-1}+\frac{A_0}{x}\right)Y(x),\quad
	A_t,A_1,A_0 \in \mathrm{Mat}(2n;\mathbb{C}),
\end{equation}
with regular singularities $x=t,1,0,\infty$.
Here we assume
\begin{enumerate}
\item
The data of eigenvalues of residue matrices is given by
\[\begin{array}{lllllllll}
	\theta_t,& \theta_t,& \ldots,& \theta_t,& 0,& \ldots,& 0& \text{at}& x=t,\\
	\theta_1,& \theta_1,& \ldots,& \theta_1,& 0,& \ldots,& 0& \text{at}& x=1,\\
	\theta_0,& 0,& \ldots,& 0,& 0,& \ldots,& 0& \text{at}& x=0,\\
	\kappa_1,& \kappa_2,& \ldots,& \kappa_n,& \kappa_{n+1},& \ldots,& \kappa_{2n}& \text{at}& x=\infty.
\end{array}\]
\item
Each residue matrix can be diagonalized.
\end{enumerate}
Note that the Fuchsian relation $n\theta_t+n\theta_1+\theta_0+\sum_{i=1}^{2n}\kappa_i=0$ is satisfied.
The monodromy preserving deformation of the system \eqref{Eq:Sch_Lax} is described as the Schlesinger system
\begin{equation}\label{Eq:Sch}
	\frac{\partial A_t}{\partial t} = -\frac{[A_t,A_0]}{t} - \frac{[A_t,A_1]}{t-1},\quad
	\frac{\partial A_1}{\partial t} = \frac{[A_t,A_1]}{t-1},\quad
	\frac{\partial A_0}{\partial t} = \frac{[A_t,A_0]}{t}.
\end{equation}
Note that the residue matrix $A_{\infty}=-A_t-A_1-A_0$ at $x=\infty$ is a constant matrix.
The system \eqref{Eq:Sch} can be expressed as a Hamiltonian system
\begin{equation}\label{Eq:Sch_Ham}
	\frac{\partial A_{\xi}}{\partial t} = \{K,A_{\xi}\}\quad (\xi=t,1,0),\quad
	K = \frac{\mathrm{tr}A_tA_1}{t-1} + \frac{\mathrm{tr}A_tA_0}{t},
\end{equation}
with the Poisson bracket
\[
	\{(A_{\xi})_{k,l},(A_{\xi'})_{r,s}\} = \delta_{\xi,\xi'}\{\delta_{r,l}(A_{\xi})_{k,s}-\delta_{k,s}(A_{\xi})_{r,l}\},
\]
where $\delta_{i,j}$ stands for the Kronecker delta.

We consider a gauge transformation $\widetilde{A}_{\xi}=G^{-1}A_{\xi}G$ $(\xi=t,1,0,\infty)$ such that
\[
	\widetilde{A}_0 = \begin{bmatrix}\theta_0&a^{(0)}_2&\ldots&a^{(0)}_{2n}\\0&0&\ldots&0\\\vdots&\vdots&\ddots&\vdots\\0&0&\ldots&0\end{bmatrix},\quad
	\widetilde{A}_{\infty} = \begin{bmatrix}\kappa_1&&&\bm{O}\\a^{(\infty)}_2&\kappa_2&&\\\vdots&&\ddots&\\a^{(\infty)}_{2n}&\bm{O}&&\kappa_{2n}\end{bmatrix}.
\]
Here the matrix $G$ is decomposed into a product of two matrices as $G=G_1G_2$, where $G_1^{-1}A_{\infty}G_1$ is a diagonal matrix and $G_2$ is a lower triangle matrix of which all entries on the diagonals are one.
Then the system \eqref{Eq:Sch_Ham} is transformed into
\begin{equation}\label{Eq:Sch_Ham_Sak}
	\frac{\partial\widetilde{A}_{\xi}}{\partial t} = \{K,\widetilde{A}_{\xi}\}\quad (\xi=t,1,0),\quad
	K = \frac{\mathrm{tr}\widetilde{A}_t\widetilde{A}_1}{t-1} + \frac{\mathrm{tr}\widetilde{A}_t\widetilde{A}_0}{t},
\end{equation}
with the Poisson bracket
\begin{equation}\label{Eq:Sch_Ham_Sak_Poi}
	\{(\widetilde{A}_{\xi})_{k,l},(\widetilde{A}_{\xi'})_{r,s}\} = \delta_{\xi,\xi'}\{\delta_{r,l}(\widetilde{A}_{\xi})_{k,s}-\delta_{k,s}(\widetilde{A}_{\xi})_{r,l}\}.
\end{equation}
Note that the following relation is satisfied:
\begin{equation}\label{Eq:Res_Mat_Sak}
	\widetilde{A}_t + \widetilde{A}_1 + \widetilde{A}_0 + \widetilde{A}_{\infty} = 0.
\end{equation}

In order to derive the canonical Hamiltonian system from \eqref{Eq:Sch_Ham_Sak}, we use the method established in \cite{JMMS}.
Consider a decomposition of matrices $A_{\xi}$ $(\xi=t,1)$ as
\[
	\widetilde{A}_{\xi} = \begin{bmatrix}I_n\\B_{\xi}\end{bmatrix}\begin{bmatrix}\theta_{\xi}I_n-C_{\xi}B_{\xi},&C_{\xi}\end{bmatrix},
\]
where $B_{\xi}=[b^{(\xi)}_{i,j}]$ and $C_{\xi}=[c^{(\xi)}_{i,j}]$ are $n\times n$ matrices.
Then we can regard $b^{(\xi)}_{i,j}$ and $c^{(\xi)}_{j,i}$ as canonical variables.
In fact, the Poisson bracket
\begin{equation}\label{Eq:Sch_Ham_Sak_Poi_JMMS}
	\{b^{(\xi)}_{i,j},c^{(\xi)}_{j,i}\} = -1\quad (i,j=1,\ldots,n; \xi=t,1),\quad \{\text{otherwise}\} = 0.
\end{equation}
implies the one \eqref{Eq:Sch_Ham_Sak_Poi}.

The number of accessary parameters of the system \eqref{Eq:Sch_Lax} is equal to $2n$.
Therefore the system \eqref{Eq:Sch_Ham_Sak} with \eqref{Eq:Res_Mat_Sak} can be rewritten into the Hamiltonian system of order $2n$, which is just equivalent to \eqref{Eq:CP6} as we prove below.

\section{Sasano system}\label{Sec:Main_Thm}

Under the system \eqref{Eq:CP6}, we define independent and dependent variables by
\begin{equation}\label{Eq:DepVarTransf_DStoMPD}
	t = 1 - \frac{1}{s},\quad \lambda_i = 1 - \frac{q_i}{s},\quad \mu_i = -sp_i\quad (i=1,\ldots,n).
\end{equation}
Then they satisfy a Hamiltonian system
\begin{equation}\begin{split}\label{Eq:CP6_MPD}
	&t(t-1)\frac{d\lambda_i}{dt} = \frac{\partial H}{\partial\mu_i},\quad
	t(t-1)\frac{d\mu_i}{dt} = -\frac{\partial H}{\partial\lambda_i}\quad (i=1,\ldots,n),\\
	&H = \sum_{i=1}^{n}H_i[\lambda_i,\mu_i;\kappa_{i,1},\kappa_{i,0},\kappa_{i,s},\kappa_{i,\infty};t]\\
	&\qquad + \sum_{1\leq i<j\leq n}2\lambda_i\mu_i(\lambda_j-1)\{(\lambda_j-t)\mu_j+\alpha_{2j}\},
\end{split}\end{equation}
where $\kappa_{i,s},\kappa_{i,1},\kappa_{i,0},\kappa_{i,\infty}$ are the parameters defined by \eqref{Eq:CP6_Param}.
In this section, we derive the system \eqref{Eq:CP6_MPD} from the one \eqref{Eq:Sch_Ham_Sak} with \eqref{Eq:Res_Mat_Sak}.

Let $\Delta^{i_1,\ldots,i_r}_{j_1,\ldots,j_r}(A)$ be a minor determinant of $A$ for $(i_1,\ldots,i_r)$-th row and $(j_1,\ldots,j_r)$-th column.
Then we arrive at
\begin{thm}\label{Main_Thm}
Under the system \eqref{Eq:Sch_Ham_Sak} with \eqref{Eq:Res_Mat_Sak} and \eqref{Eq:Sch_Ham_Sak_Poi_JMMS}, we set
\begin{equation}\begin{split}\label{Cano_Coor}
	\mu_i &= (-1)^{n-i}t^{-1}\frac{\Delta^{1,i+1,\ldots,n}_{i,i+1,\ldots,n}(C_1)}{\Delta^{i+1,\ldots,n}_{i+1,\ldots,n}(C_t)}\sum_{k=1}^{i}\frac{\Delta^{i,i+1,\ldots,n}_{k,i+1,\ldots,n}(C_t)}{\Delta^{i,\ldots,n}_{i,\ldots,n}(C_t)}b^{(t)}_{k,1},\\
	\lambda_i &= (-1)^{n-i+1}t\frac{\Delta^{1,i+1,\ldots,n}_{i,i+1,\ldots,n}(C_t)}{\Delta^{1,i+1,\ldots,n}_{i,i+1,\ldots,n}(C_1)}\quad (i=1,\ldots,n).
\end{split}\end{equation}
Then those variables are found out to be canonical coordinates of a $2n$-dimensional system with the Poisson bracket
\[
	\{\mu_i,\lambda_j\} = \delta_{i,j},\quad
	\{\mu_i,\mu_j\} = \{\lambda_i,\lambda_j\} = 0\quad (i,j=1,\ldots,n).
\]
Furthermore they satisfy the system \eqref{Eq:CP6_MPD} with the parameters
\begin{equation}\begin{split}\label{Parameter}
	&\alpha_1 = -\theta_t,\quad
	\alpha_2 = -\kappa_{n+1},\quad
	\alpha_{2i-1} = \theta_t + \theta_1 + \kappa_i + \kappa_{n+i-1},\\
	&\alpha_{2i} = -\theta_t - \theta_1 - \kappa_i - \kappa_{n+i}\quad (i=2,\ldots,n),\\
	&\alpha_{2n+1} = \theta_t + \theta_1 + \kappa_1 + \kappa_{2n},\quad
	\alpha_{2n+2} = -\kappa_1 + \kappa_{2n} + 1.
\end{split}\end{equation}
\end{thm}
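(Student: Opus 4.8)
The plan is to prove the two assertions of the theorem --- that $(\lambda_i,\mu_i)$ form a canonical system and that they obey \eqref{Eq:CP6_MPD} --- separately, relying throughout on the block decomposition of the residue matrices and on the linear constraints imposed by \eqref{Eq:Res_Mat_Sak} together with the prescribed shapes of $\widetilde{A}_0$ and $\widetilde{A}_\infty$. First I would write out \eqref{Eq:Res_Mat_Sak} block by block. Comparing the upper-right $n\times n$ blocks gives $C_t+C_1=-(\widetilde{A}_0)_{12}$, whose only nonzero row is the first; hence rows $2,\ldots,n$ of $C_1$ coincide with the negatives of the corresponding rows of $C_t$. The lower-right block shows that $B_tC_t+B_1C_1$ is lower triangular, and the remaining blocks express $\widetilde{A}_0$ and the off-diagonal entries of $\widetilde{A}_\infty$ through $B_\xi,C_\xi$. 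These relations are what cut the $4n^2$ entries down to the $2n$ genuine degrees of freedom and must be used to make every subsequent identity hold.

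For the canonical structure I would exploit that, by \eqref{Eq:Sch_Ham_Sak_Poi_JMMS}, all the $c^{(\xi)}$ commute among themselves while $\{b^{(t)}_{k,1},c^{(t)}_{1,k}\}=-1$. Since each $\lambda_j$ is a ratio of minors of $C_t$ and $C_1$, it is a function of the $c$-variables alone, so $\{\lambda_i,\lambda_j\}=0$ is immediate. Writing $\mu_i=\sum_{k=1}^{i}g_{ik}\,b^{(t)}_{k,1}$ with $g_{ik}$ a ratio of minors of $C_t,C_1$, the bracket $\{\mu_i,\lambda_j\}$ collapses to $-\sum_{k}g_{ik}\,\partial\lambda_j/\partial c^{(t)}_{1,k}$, and each such partial derivative is a single cofactor of the minor $\Delta^{1,j+1,\ldots,n}_{j,j+1,\ldots,n}(C_t)$. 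For $j=i$ only $k=i$ survives and the cofactor is exactly $\Delta^{i+1,\ldots,n}_{i+1,\ldots,n}(C_t)$, which cancels the prefactors of $g_{ii}$ and leaves $\{\mu_i,\lambda_i\}=1$; for $j<i$ the sum over $j\le k\le i$ is a Laplace expansion of a determinant with a repeated column and therefore vanishes, while for $j>i$ the sum is empty. The bracket $\{\mu_i,\mu_j\}$ reduces similarly to an antisymmetric combination $\sum_k(g_{ik}\,\partial g_{jl}/\partial c^{(t)}_{1,k}-g_{jk}\,\partial g_{il}/\partial c^{(t)}_{1,k})$ in which the contractions force a determinant with two equal columns; I would verify its vanishing with the Jacobi/Sylvester minor identity (equivalently a Pl\"ucker relation). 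This chain of minor identities is the part I expect to be the main obstacle.

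Having established that $(\lambda_i,\mu_i)$ are canonical, the second assertion follows by transforming the Hamiltonian, a canonical change of variables preserving the Hamiltonian form of \eqref{Eq:Sch_Ham_Sak}. I would compute $\mathrm{tr}\,\widetilde{A}_t\widetilde{A}_1$ and $\mathrm{tr}\,\widetilde{A}_t\widetilde{A}_0$ from the rank-$n$ factorizations: cyclically $\mathrm{tr}\,\widetilde{A}_t\widetilde{A}_1=\mathrm{tr}[(\theta_tI-C_tD)(\theta_1I+C_1D)]$ with $D=B_t-B_1$, while $\mathrm{tr}\,\widetilde{A}_t\widetilde{A}_0$ is obtained by substituting $\widetilde{A}_0=-\widetilde{A}_t-\widetilde{A}_1-\widetilde{A}_\infty$ and using $\mathrm{tr}\,\widetilde{A}_t^2=n\theta_t^2$. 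Inserting the minor formulas \eqref{Cano_Coor} together with the block constraints turns $K$ into a rational function of $\lambda_i,\mu_i$. The point requiring care is the explicit $t$-dependence of \eqref{Cano_Coor}: since $\lambda_i$ carries a factor $t$ and $\mu_i$ a factor $t^{-1}$, the total $t$-derivatives of $\lambda_i$ and $\mu_i$ exceed $\{K,\lambda_i\}$ and $\{K,\mu_i\}$ by $\lambda_i/t$ and $-\mu_i/t$; hence in the normalization of \eqref{Eq:CP6_MPD} the effective Hamiltonian is $t(t-1)K+(t-1)\sum_i\lambda_i\mu_i$ up to an additive function of $t$, and it is this object that must be matched to $H$. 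Finally I would read off \eqref{Parameter} by comparing its coefficients with those of $\sum_iH_i$ and the interaction terms of \eqref{Eq:CP6_MPD}, using \eqref{Eq:CP6_Param} and the Fuchsian relation $n\theta_t+n\theta_1+\theta_0+\sum_{i=1}^{2n}\kappa_i=0$ to confirm consistency.
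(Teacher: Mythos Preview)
Your plan follows the paper's two-stage structure and correctly identifies the correction term: the paper works with $\widetilde{K}=K+t^{-1}\sum_i\lambda_i\mu_i$, which is exactly your $t(t-1)K+(t-1)\sum_i\lambda_i\mu_i$ after scaling. Two points where the paper is sharper than your outline:

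For $\{\mu_i,\mu_j\}=0$ no Pl\"ucker or Sylvester identity is needed. Every minor of $C_t$ occurring in the formula \eqref{Cano_Coor} for $\mu_i$ uses only rows $i,i+1,\ldots,n$, never row~$1$; hence no $c^{(t)}_{1,k}$ appears in $\mu_i$. Since the only $b$-variables present are the $b^{(t)}_{k,1}$, whose conjugates under \eqref{Eq:Sch_Ham_Sak_Poi_JMMS} are precisely those absent $c^{(t)}_{1,k}$, the bracket vanishes trivially. The only genuine minor identity in the first half is for $\{\mu_i,\lambda_j\}$ with $j<i$, and there your Laplace-expansion argument is essentially what the paper does.

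For the Hamiltonian the paper does not substitute the minor formulas directly into $\mathrm{tr}\,\widetilde{A}_t\widetilde{A}_\xi$. It first uses a $(1,n-1,n)$ block partition (finer than your $(n,n)$ split) together with the constraints \eqref{Eq:Res_Mat_Sak} to reduce $\widetilde{K}$ to a polynomial in five scalar traces such as $\mathrm{tr}\,E_1C_tB_t$, $\mathrm{tr}\,E_1C_tB_1$, $\mathrm{tr}\,E_1C_1(B_t-B_1)E_{2n}C_tB_t$. It then packages the change of variables as $\mu=P\,b$ and $\lambda=-c\,P^{-1}$ (row/column vectors $b=(b^{(t)}_{k,1})$, $c=(c^{(t)}_{1,k})$) for the lower-triangular matrix $P$ whose entries are the minor ratios in \eqref{Cano_Coor}, computes $P^{-1}$ explicitly via Pl\"ucker relations, and evaluates each scalar trace by conjugating the relevant rank-one matrix by $P$. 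Your plan to ``insert the minor formulas'' would eventually succeed, but this $P$-trick is the organizing idea that keeps the computation tractable for general $n$.
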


\subsection{Canonical coordinates}

In this subsection, we prove the first half of Theorem \ref{Main_Thm}.

We can show $\{\mu_i,\lambda_j\}=\delta_{i,j}$ as follows.
Denoting $\Delta^{j+1,\ldots,n}_{j,\ldots,l-1,l+1,\ldots,n}$ by $\Delta^{j+1,\ldots,n}_{j,\ldots,\widehat{l},\ldots,n}$, we have
\begin{equation}\label{Eq:Prf_Cano_Coor}
	\{\mu_i,\lambda_j\} = \frac{\Delta^{1,i+1,\ldots,n}_{i,i+1,\ldots,n}(C_1)}{\Delta^{i+1,\ldots,n}_{i+1,\ldots,n}(C_t)}\sum_{k=1}^{i}\frac{\Delta^{i,i+1,\ldots,n}_{k,i+1,\ldots,n}(C_t)}{\Delta^{i,\ldots,n}_{i,\ldots,n}(C_t)}\sum_{l=j}^{n}(-1)^{l-j}\frac{\Delta^{j+1,\ldots,n}_{j,\ldots,\widehat{l},\ldots,n}(C_t)}{\Delta^{1,j+1,\ldots,n}_{j,j+1,\ldots,n}(C_1)}\delta_{k,l}.
\end{equation}
If $i<j$, the right-hand side of \eqref{Eq:Prf_Cano_Coor} turns to be zero.
If $i=j$, the right-hand side of \eqref{Eq:Prf_Cano_Coor} turns to be one.
If $j<i$, then we have
\[
	(\text{RHS of \eqref{Eq:Prf_Cano_Coor}}) = \frac{\Delta^{1,i+1,\ldots,n}_{i,i+1,\ldots,n}(C_1)}{\Delta^{i+1,\ldots,n}_{i+1,\ldots,n}(C_t)}\sum_{k=j}^{i}(-1)^{k-j}\frac{\Delta^{i,i+1,\ldots,n}_{k,i+1,\ldots,n}(C_t)\Delta^{j+1,\ldots,n}_{j,\ldots,\widehat{k},\ldots,n}(C_t)}{\Delta^{i,\ldots,n}_{i,\ldots,n}(C_t)\Delta^{1,j+1,\ldots,n}_{j,j+1,\ldots,n}(C_1)}.
\]
On the other hand, we obtain
\[\begin{split}
	&\sum_{k=j}^{i}(-1)^{k-j}\Delta^{i,i+1,\ldots,n}_{k,i+1,\ldots,n}(C_t)\Delta^{j+1,\ldots,n}_{j,\ldots,\widehat{k},\ldots,n}(C_t)\\
	&\quad = \begin{vmatrix}\Delta^{i,i+1,\ldots,n}_{j,i+1,\ldots,n}(C_t)&\Delta^{i,i+1,\ldots,n}_{j+1,i+1,\ldots,n}(C_t)&\ldots&\Delta^{i,\ldots,n}_{i,\ldots,n}(C_t)&0&\ldots&0\\c^{(t)}_{j+1,j}&c^{(t)}_{j+1,j+1}&\ldots&c^{(t)}_{j+1,i}&c^{(t)}_{j+1,i+1}&\ldots&c^{(t)}_{j+1,n}\\\vdots&\vdots&\ddots&\vdots&\vdots&\ddots&\vdots\\c^{(t)}_{n,j}&c^{(t)}_{n,j+1}&\ldots&c^{(t)}_{n,i}&c^{(t)}_{n,i+1}&\ldots&c^{(t)}_{n,n}\end{vmatrix}\\
	&\quad = \sum_{l=i}^{n}(-1)^{l-i}\Delta^{i,\ldots,\widehat{l},\ldots,n}_{i+1,\ldots,n}(C_t)\Delta^{l,j+1,\ldots,n}_{j,j+1,\ldots,n}(C_t)\\
	&\quad = 0.
\end{split}\]
Hence the right-hand side of \eqref{Eq:Prf_Cano_Coor} turns to be zero.

We can show $\{\mu_i,\mu_j\}=0$ and $\{\lambda_i,\lambda_j\}=0$ immediately because rational expressions $\mu_i$ and $\lambda_i$ defined by \eqref{Cano_Coor} do not contain the canonical variables $c^{(t)}_{1,k}$ and $b^{(t)}_{k,1}$, respectively.

\subsection{Derivation of the Sasano system}

In this subsection, we prove the second half of Theorem \ref{Main_Thm}.

Under the system \eqref{Eq:Sch_Ham_Sak} with \eqref{Eq:Res_Mat_Sak}, the dependent variables $\mu_i,\lambda_i$ given by \eqref{Cano_Coor} satisfy
\[\begin{split}
	&\frac{\partial \mu_i}{\partial t} = \{\widetilde{K},\mu_i\},\quad
	\frac{\partial \lambda_i}{\partial t} = \{\widetilde{K},\lambda_i\}\quad (i=1,\ldots,n),\\
	&\widetilde{K} = \frac{\mathrm{tr}\widetilde{A}_t\widetilde{A}_1}{t-1} + \frac{\mathrm{tr}\widetilde{A}_t\widetilde{A}_0}{t} + \sum_{i=1}^{n}\frac{\mu_i\lambda_i}{t}.
\end{split}\]
Hence it is enough to verify that the Hamiltonian $\widetilde{K}$ is transformed into the one $H$ given by \eqref{Eq:CP6_MPD} via the transformation \eqref{Cano_Coor} and \eqref{Parameter}.

First we consider a partition of residue matrix
\[
	\widetilde{A}_{\xi} = \begin{bmatrix}A^{(\xi)}_{11}&A^{(\xi)}_{12}&A^{(\xi)}_{13}\\A^{(\xi)}_{21}&A^{(\xi)}_{22}&A^{(\xi)}_{23}\\A^{(\xi)}_{31}&A^{(\xi)}_{32}&A^{(\xi)}_{33}\end{bmatrix}\quad (\xi=t,1,0,\infty),
\]
where each block $A^{(\xi)}_{ij}$ is an $n_i\times n_j$ matrix with $(n_1,n_2,n_3)=(1,n-1,n)$.
With this block form, the relation \eqref{Eq:Res_Mat_Sak} is described as
\begin{equation}\label{Eq:Res_Mat_block}
	A^{(t)}_{ij} + A^{(1)}_{ij} + A^{(0)}_{ij} + A^{(\infty)}_{ij}\quad (i,j=1,2,3).
\end{equation}
The Hamiltonian $\widetilde{K}$ is given by
\begin{equation}\begin{split}\label{Eq:Sch_Ham_block}
	\mathrm{tr}\widetilde{A}_t\widetilde{A}_1 &= \sum_{i=1}^{3}\sum_{j=1}^{3}\mathrm{tr}A^{(t)}_{ij}A^{(1)}_{ji},\\
	\mathrm{tr}\widetilde{A}_t\widetilde{A}_0 &= \theta_0A^{(t)}_{11} - \mathrm{tr}A^{(t)}_{21}(A^{(t)}_{12}+A^{(1)}_{12}) - \mathrm{tr}A^{(t)}_{31}(A^{(t)}_{13}+A^{(1)}_{13}).
\end{split}\end{equation}
Note that $A^{(0)}_{2j}=A^{(0)}_{3j}=0$ and $A^{(\infty)}_{12}=A^{(\infty)}_{13}=A^{(\infty)}_{23}=A^{(\infty)}_{32}=0$.

Next we rewrite the Hamiltonian given by \eqref{Eq:Sch_Ham_block} into the one expressed in terms of the matrices $B_t,C_t,B_1,C_1$.
Let $E_1=\mathrm{diag}[1,0,\ldots,0]$ and $E_{2n}=\mathrm{diag}[0,1,\ldots,1]$.
Then the relation \eqref{Eq:Res_Mat_block} implies
\[
	E_1(C_tB_t+C_1B_1)E_1 - (\theta_t+\theta_1+\theta_0+\kappa_1)E_1 = 0,
\]
for $(i,j)=(1,1)$;
\[
	E_{2n}(C_tB_t+C_1B_1)E_{2n} - \mathrm{diag}[0,\theta_t+\theta_1+\kappa_2,\ldots,\theta_t+\theta_1+\kappa_n] = 0,
\]
for $(i,j)=(3,3)$;
\[
	E_{2n}(C_t+C_1) = 0,
\]
for $(i,j)=(2,3)$;
\[
	B_tC_t + B_1C_1 + \mathrm{diag}[\kappa_{n+1},\ldots,\kappa_{2n}] = 0,
\]
for $(i,j)=(3,3)$.
We obtain from them
\[\begin{split}
	\mathrm{tr}\widetilde{A}_t\widetilde{A}_1 &= (\mathrm{tr}E_1C_tB_t)(\mathrm{tr}E_1C_tB_1) - (\mathrm{tr}E_1C_tB_t)(\mathrm{tr}E_1C_1B_t) - 2(\mathrm{tr}E_1C_tB_t)^2\\
	&\quad - \mathrm{tr}E_1C_1(B_t-B_1)E_{2n}C_tB_t - \mathrm{tr}E_1C_t(B_t-B_1)E_{2n}C_tB_1\\
	&\quad + (3\theta_t+\theta_1+2\theta_0+2\kappa_1)\mathrm{tr}E_1C_tB_t - (\theta_t+\theta_0+\kappa_1)\mathrm{tr}E_1C_tB_1\\
	&\quad + \theta_t\mathrm{tr}E_1C_1B_t + n\theta_t\theta_1 - \frac{1}{2}\sum_{i=2}^{n}(\theta_t+\theta_1-\kappa_i)(\theta_t+\theta_1+\kappa_i)\\
	&\quad + \frac{1}{2}\sum_{i=1}^{n}\kappa_{n+i}^2 - \frac{1}{2}(\theta_t+\theta_1+\theta_0+\kappa_1)^2 - \theta^t(\theta_t+\theta_1+\theta_0+\kappa_1),\\
\end{split}\]
and
\[\begin{split}
	\mathrm{tr}\widetilde{A}_t\widetilde{A}_0 &= (\mathrm{tr}E_1C_tB_t)(\mathrm{tr}E_1C_1B_t) + (\mathrm{tr}E_1C_tB_t)^2 + \mathrm{tr}E_1C_1(B_t-B_1)E_{2n}C_tB_t\\
	&\quad - (\theta_t+\theta_0)\mathrm{tr}E_1C_tB_t - \theta_t\mathrm{tr}E_1C_1B_t + \theta_t\theta_0.
\end{split}\]

In order to derive the Hamiltonian $H$ given by \eqref{Eq:CP6_MPD}, we introduce the following lemma.
\begin{lem}
We have relations
\[\begin{split}
	&\mathrm{tr}E_1C_tB_t = -\sum_{i=1}^{n}\lambda_i\mu_i,\quad
	\mathrm{tr}E_1C_1B_t = t\sum_{i=1}^{n}\mu_i,\\
	&\mathrm{tr}E_1C_tB_1 = -\frac{1}{t}\sum_{i=1}^{n}\lambda_i(\lambda_i\mu_i+\beta_i),\\
	&\mathrm{tr}E_1C_1(B_t-B_1)E_{2n}C_tB_t\\
	&\quad = t\sum_{i=1}^{n}\mu_i\left\{-\sum_{j=1}^{i-1}(\lambda_j\mu_j+\beta_j)-\beta_i-\kappa_{n+i}+\sum_{j=i+1}^{n}\lambda_j\mu_j\right\},\\
	&\mathrm{tr}E_1C_t(B_t-B_1)E_{2n}C_tB_1\\
	&\quad = -\frac{1}{t}\sum_{i=1}^{n}\lambda_i(\lambda_i\mu_i+\beta_i)\left\{\sum_{j=1}^{i-1}\lambda_j\mu_j-\beta_i-\kappa_{n+i}-\sum_{j=i+1}^{n}(\lambda_j\mu_j+\beta_j)\right\},
\end{split}\]
where
\[
	\beta_1 = -\kappa_{n+1},\quad
	\beta_i = -\theta_t - \theta_1 - \kappa_i - \kappa_{n+i}\quad (i=2,\ldots,n).
\]
\end{lem}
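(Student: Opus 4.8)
The plan is to reduce every trace in the statement to an entrywise expression and then substitute the explicit rational formulas \eqref{Cano_Coor}. Since $E_1=\mathrm{diag}[1,0,\ldots,0]$, for any $n\times n$ matrix $M$ one has $\mathrm{tr}(E_1M)=M_{1,1}$; hence each of the five quantities is a (multi)linear form built from the first row of $C_t$ or $C_1$ and the first column of $B_t$ or $B_1$, with the factor $E_{2n}=I_n-E_1$ in the last two cases projecting the intermediate index onto $\{2,\ldots,n\}$. Thus the lemma is essentially the task of inverting the change of variables \eqref{Cano_Coor}, i.e.\ of re-expressing these boundary rows and columns through $\lambda_i,\mu_i$.

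First I would record the structural relations derived from \eqref{Eq:Res_Mat_block} just above the lemma: $E_{2n}(C_t+C_1)=0$, the diagonal identities $(C_tB_t+C_1B_1)_{1,1}=\theta_t+\theta_1+\theta_0+\kappa_1$ and $(C_tB_t+C_1B_1)_{i,i}=\theta_t+\theta_1+\kappa_i$ for $i\ge 2$, and $B_tC_t+B_1C_1=-\mathrm{diag}[\kappa_{n+1},\ldots,\kappa_{2n}]$. The first of these is the key simplification: rows $2,\ldots,n$ of $C_1$ equal the negatives of the corresponding rows of $C_t$, so every minor of $C_1$ occurring in \eqref{Cano_Coor} whose rows all lie in $\{i+1,\ldots,n\}$ is, up to sign, the like-indexed minor of $C_t$; only the top row $c^{(1)}_{1,\bullet}$ enters as genuinely new data. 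The last relation carries the constants $\kappa_{n+i}$, and combined with the diagonal identities it is exactly what produces the shift parameters $\beta_i$ (indeed $\beta_i=-(C_tB_t+C_1B_1)_{i,i}-\kappa_{n+i}$ for $i\ge2$, while $\beta_1=-\kappa_{n+1}$ is the special case).

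For the three bilinear traces I would match, on the constraint surface \eqref{Eq:Res_Mat_block}, the coefficient of each independent entry $b^{(t)}_{k,1}$ or $b^{(1)}_{k,1}$ on the two sides. Writing $\mu_i=\sum_{k=1}^{i}P_{i,k}\,b^{(t)}_{k,1}$ with $P_{i,k}$ the minor ratio read off from \eqref{Cano_Coor}, the identity $\mathrm{tr}E_1C_1B_t=t\sum_i\mu_i$ reduces to the purely determinantal claim $c^{(1)}_{1,k}=t\sum_{i\ge k}P_{i,k}$, and the other two are analogous. Each such claim is a Laplace/cofactor-expansion identity among consecutive minors of $C_t$ and $C_1$ of the same shape as the determinant identity already used in the first half of the proof to establish $\{\mu_i,\lambda_j\}=\delta_{i,j}$; I would prove them by the same telescoping expansion (equivalently, by Sylvester's and Jacobi's minor identities), invoking $E_{2n}(C_t+C_1)=0$ to pass between the two matrices. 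The relation $\mathrm{tr}E_1C_tB_1=-\tfrac1t\sum_i\lambda_i(\lambda_i\mu_i+\beta_i)$ additionally uses $B_tC_t+B_1C_1=-\mathrm{diag}[\kappa_{n+\bullet}]$ to generate the $\beta_i$ term.

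The main obstacle will be the two trilinear traces $\mathrm{tr}E_1C_1(B_t-B_1)E_{2n}C_tB_t$ and $\mathrm{tr}E_1C_t(B_t-B_1)E_{2n}C_tB_1$. Here the middle projector $E_{2n}$ produces a sum restricted to the indices $2,\ldots,n$, and after substitution the contributions must reorganize into the prescribed triangular sums, with the separate ranges $\sum_{j<i}$ and $\sum_{j>i}$ and the isolated $-\beta_i-\kappa_{n+i}$ diagonal term appearing with the correct signs. I expect the bookkeeping of these index ranges, together with the use of $B_tC_t+B_1C_1=-\mathrm{diag}[\kappa_{n+\bullet}]$ to collapse the diagonal part of the restricted sum into the $\kappa_{n+i}$ and $\beta_i$ constants, to be the delicate step; the underlying determinant manipulations belong to the same family of minor identities as before, so the difficulty is organizational rather than conceptual.
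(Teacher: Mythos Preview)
Your plan is workable, but it misses the organizing device the paper uses and thereby turns one computation into five. The paper does not match coefficients. Instead it packages the change of variables \eqref{Cano_Coor} into a single lower-triangular matrix
\[
P=\bigl[f_{i,k}\bigr],\qquad
f_{i,k}=(-1)^{n-i}t^{-1}\frac{\Delta^{1,i+1,\ldots,n}_{i,i+1,\ldots,n}(C_1)\,\Delta^{i,i+1,\ldots,n}_{k,i+1,\ldots,n}(C_t)}{\Delta^{i+1,\ldots,n}_{i+1,\ldots,n}(C_t)\,\Delta^{i,\ldots,n}_{i,\ldots,n}(C_t)},
\]
computes $P^{-1}$ explicitly via Pl\"ucker relations, and then observes that \eqref{Cano_Coor} reads
\[
(\mu_1,\ldots,\mu_n)^{T}=P\,(b^{(t)}_{1,1},\ldots,b^{(t)}_{n,1})^{T},\qquad
(\lambda_1,\ldots,\lambda_n)=-\,(c^{(t)}_{1,1},\ldots,c^{(t)}_{1,n})\,P^{-1}.
\]
The first relation is then immediate by conjugation invariance of trace:
\[
\mathrm{tr}\,E_1C_tB_t=\mathrm{tr}\Bigl(P\,[b^{(t)}_{\bullet,1}][c^{(t)}_{1,\bullet}]\,P^{-1}\Bigr)=-\sum_i\lambda_i\mu_i,
\]
and the paper asserts that the remaining four follow ``in a similar way,'' i.e.\ by applying the same adjoint action of $P$ to the relevant rank-one pieces and then invoking the constraints you list.

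Your coefficient-matching strategy is the component form of this: for instance your claim $c^{(1)}_{1,k}=t\sum_{i\ge k}P_{i,k}$ is exactly the statement $(c^{(1)}_{1,\bullet})P^{-1}=t\,(1,\ldots,1)$, and the identity you would need for the first relation, $c^{(t)}_{1,k}=-\sum_{i\ge k}\lambda_iP_{i,k}$, is nothing but $\lambda=-c^{(t)}_{1,\bullet}P^{-1}$. So you are implicitly re-inverting $P$ once per identity rather than doing it once. This is not wrong, but it obscures why all five relations are instances of the same mechanism, and it makes the trilinear cases look harder than they are: with $P$ in hand, $\mathrm{tr}\,E_1C_1(B_t-B_1)E_{2n}C_tB_t$ becomes the trace of a product of explicitly known vectors and a single $n\times n$ block, and the triangular split into $\sum_{j<i}$ and $\sum_{j>i}$ comes from the triangularity of $P$ and $P^{-1}$ rather than from ad hoc bookkeeping.
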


\begin{proof}
We only prove the first relation here.
The other ones can be proved in a similar way.

We take an $n\times n$ matrix
\[
	P = \begin{bmatrix}f_{1,1}&&&O\\f_{2,1}&f_{2,2}&&\\\vdots&\vdots&\ddots&\\f_{n,1}&f_{n,2}&\ldots&f_{n,n}\end{bmatrix},\quad
	f_{i,k} = (-1)^{n-i}t^{-1}\frac{\Delta^{1,i+1,\ldots,n}_{i,i+1,\ldots,n}(C_1)\Delta^{i,i+1,\ldots,n}_{k,i+1,\ldots,n}(C_t)}{\Delta^{i+1,\ldots,n}_{i+1,\ldots,n}(C_t)\Delta^{i,\ldots,n}_{i,\ldots,n}(C_t)}.
\]
Its inverse matrix of is given by
\[
	P^{-1} = \begin{bmatrix}g_{1,1}&&&O\\g_{2,1}&g_{2,2}&&\\\vdots&\vdots&\ddots&\\g_{n,1}&g_{n,2}&\ldots&g_{n,n}\end{bmatrix},\quad
	g_{k,i} = (-1)^{n-k}t\frac{\Delta^{i+1,\ldots,n}_{i,\ldots,\widehat{k},\ldots,n}(C_t)}{\Delta^{1,i+1,\ldots,n}_{i,i+1,\ldots,n}(C_1)}.
\]
Note that we derive an explicit formula of $P^{-1}$ by using the Pl\"{u}cker relations for matrices.
Then we can rewrite \eqref{Cano_Coor} into
\begin{equation}\label{Eq:Prf_Lem_Main_Thm_1}
	\begin{bmatrix}\mu_1\\\vdots\\\mu_n\end{bmatrix} = P\begin{bmatrix}b^{(t)}_{1,1}\\\vdots\\b^{(t)}_{n,1}\end{bmatrix},\quad
	\begin{bmatrix}\lambda_1&\ldots&\lambda_n\end{bmatrix} = -\begin{bmatrix}c^{(t)}_{1,1}&\ldots&c^{(t)}_{n,1}\end{bmatrix}P^{-1}.
\end{equation}
On the other hand, an adjoint action of $P$ implies
\begin{equation}\label{Eq:Prf_Lem_Main_Thm_2}
	\mathrm{tr}E_1C_tB_t = \mathrm{tr}P\begin{bmatrix}b^{(t)}_{1,1}\\\vdots\\b^{(t)}_{n,1}\end{bmatrix}\begin{bmatrix}c^{(t)}_{1,1}&\ldots&c^{(t)}_{n,1}\end{bmatrix}P^{-1}.
\end{equation}
Combining \eqref{Eq:Prf_Lem_Main_Thm_1} and \eqref{Eq:Prf_Lem_Main_Thm_2}, we derive the first relation.
\end{proof}

\section{Laplace transformation}\label{Sec:Laplace}

As is seen in the previous section, the system \eqref{Eq:CP6} is derived from the Fuchsian system.
On the other hand, in the previous work \cite{FS1}, it was also derived from the Lax pair associated with the loop algebra $\mathfrak{so}_{4n+4}[z,z^{-1}]$.
In this section, we clarify a relation between those two linear systems with the aid of a Laplace transformation for the case $n=2$.

We recall the definition of the loop algebra $\mathfrak{so}_{2N}$ for $N\geq3$.
Let $E_{i,j}$ be a $2N\times2N$ matrix with 1 on the $(i,j)$-th entry and zeros elsewhere.
We also set $X_{i,j}=E_{i,j}-E_{2N+1-j,2N+1-i}$.
Then the loop algebra $\mathfrak{so}_{2N}[z,z^{-1}]$ is generated by
\[\begin{split}
	&e_0=zX_{2N-1,1},\quad e_i=X_{i,i+1}\quad (i=1,\ldots,N-1),\quad e_N=X_{N-1,N+1},\\
	&f_0=\frac{1}{z}X_{1,2N-1},\quad f_i=X_{i+1,i}\quad (i=1,\ldots,N-1),\quad f_N=X_{N+1,N-1},
\end{split}\]
and $h_i = X_{i,i}$ $(i=1,\ldots,N)$.
In the following, we use a notation
\[
	e_{i_1,\ldots,i_{n-1},i_n} = \mathrm{ad}e_{i_1}\ldots\mathrm{ad}e_{i_{n-1}}(e_{i_n}),\quad
	\mathrm{ad}e_i(e_j) = [e_i,e_j].
\]
Note that the algebra $\mathfrak{so}_{2N}$ is defined by
\[
	\mathfrak{so}_{2N} = \left\{X\in\mathrm{Mat}(2n;\mathbb{C})\bigm|JX+{}^tXJ=0\right\},\quad
	J = \sum_{i=1}^{2N}E_{i,2N+1-i}.
\]

The system \eqref{Eq:CP6} of the case $n=2$ is given as the compatibility condition of the Lax pair
\begin{equation}\label{Eq:Lax_so12}
	z\frac{\partial}{\partial z}\Psi_{12}(z) = M_{12}(z)\Psi_{12}(z),\quad
	\frac{\partial}{\partial s}\Psi_{12}(z) = B_{12}(z)\Psi_{12}(z).
\end{equation}
The matrix $M_{12}(z)\in\mathfrak{so}_{12}[z,z^{-1}]$ is described as
\[
	M_{12}(z) = \sum_{i=1}^{6}\varepsilon_ih_i - e_0 + \sum_{i=1}^{6}\varphi_ie_i - e_{1,2} - e_{2,3} - e_{3,4} - e_{4,5} - e_{4,6},
\]
where
\[\begin{split}
	&\varepsilon_1 = \frac{1}{2}(-1+\alpha_0-\alpha_1),\quad
	\varepsilon_2 = \frac{1}{2}(-1+\alpha_0+\alpha_1),\quad
	\varepsilon_3 = \frac{1}{2}(-1+\alpha_0+\alpha_1+2\alpha_2),\\
	&\varepsilon_4 = \frac{1}{2}(-2\alpha_4-\alpha_5-\alpha_6),\quad
	\varepsilon_5 = \frac{1}{2}(-\alpha_5-\alpha_6),\quad
	\varepsilon_6 = \frac{1}{2}(\alpha_5-\alpha_6),
\end{split}\]
and
\[
	\varphi_1 = s - q_1,\quad
	\varphi_2 = p_1,\quad
	\varphi_3 = q_1 - q_2,\quad
	\varphi_4 = p_2,\quad
	\varphi_5 = q_2 - 1,\quad
	\varphi_6 = q_2.
\]
The matrix $B_{12}(z)\in\mathfrak{so}_{12}[z,z^{-1}]$ is described as
\[\begin{split}
	B_{12}(z) &= \sum_{i=1}^{6}u_ih_i + \sum_{i=0}^{6}v_ie_i + e_{0,2} + v_7e_{2,3} + v_8e_{3,4} + v_9e_{4,5} + v_{10}e_{4,6} + v_{11}e_{2,3,4},
\end{split}\]
where the coefficients $u_i,v_i$ are polynomials in $(q_1,q_2,p_1,p_2)$; we do not give their explicit formulas here.
In this section, we reduce it to a Fuchsian system with a spectral type $\{31,22,22,1111\}$.

\subsection{From $\mathfrak{so}_{12}[z,z^{-1}]$ to $\mathfrak{so}_{10}[z,z^{-1}]$}

Under the system \eqref{Eq:Lax_so12}, we consider a gauge transformation
\[
	\Psi_{12}(z) \to \tau_{12}(z)\Psi^*_{12}(z),
\]
where a function $\tau_{12}(z)$ satisfies
\[
	z\frac{\partial}{\partial z}\log\tau_{12}(z) = \varepsilon_1 + 1,\quad
	\frac{\partial}{\partial s}\log\tau_{12}(z) = u_1.
\]
We also consider a Laplace transformation
\[
	\frac{\partial}{\partial z}\Psi^*_{12}(z) \to \zeta\Phi_{12}(\zeta^{-1}),\quad
	z\Psi^*_{12}(z) \to -\frac{\partial}{\partial \zeta}\Phi_{12}(\zeta^{-1}),
\]
and a M\"{o}bius transformation $\zeta\to z^{-1}$.
Then we obtain
\begin{equation}\label{Eq:Lax_so12_afterLaplace}
	z\frac{\partial}{\partial z}\Phi_{12}(z) = N_{12}(z)\Phi_{12}(z),\quad
	\frac{\partial}{\partial s}\Phi_{12}(z) = C_{12}(z)\Phi_{12}(z),
\end{equation}
with
\[\begin{split}
	N_{12}(z) &= (I_{12}-zM_{12,1})^{-1}(M_{12,0}-\varepsilon_1I_{12}),\\
	C_{12}(z) &= B_{12,0} - u_1I_{12} + zB_{12,1}(I_{12}-zM_{12,1})^{-1}(M_{12,0}-\varepsilon_1I_{12}),
\end{split}\]
where $M_{12}(z)=M_{12,0}+zM_{12,1}$ and $B_{12}(z)=B_{12,0}+zB_{12,1}$.
Note that $(M_{12,1})^2=O$, namely, $(I_{12}-zM_{12,1})^{-1}=I_{12}+zM_{12,1}$.
The first columns of $N_{12}(z)$ and $C_{12}(z)$ are both equivalent to the zero vectors.
Hence we can reduce the system \eqref{Eq:Lax_so12_afterLaplace} to the one with $11\times11$ matrices
\[
	z\frac{\partial}{\partial z}\Psi_{11}(z) = M_{11}(z)\Psi_{11}(z),\quad
	\frac{\partial}{\partial s}\Psi_{11}(z) = B_{11}(z)\Psi_{11}(z),
\]
or equivalently
\begin{equation}\label{Eq:Lax_mat11}
	z\frac{\partial}{\partial z}\Psi^{-1}_{11}(z) = -\Psi^{-1}_{11}(z)M_{11}(z),\quad
	\frac{\partial}{\partial s}\Psi^{-1}_{11}(z) = -\Psi^{-1}_{11}(z)B_{11}(z).
\end{equation}

Under the system \eqref{Eq:Lax_mat11}, we consider a gauge transformation
\[
	\Psi^{-1}_{11}(z) \to \tau_{11}(z)\Psi^*_{11}(z),\quad
	z\frac{\partial}{\partial z}\log\tau_{11}(z) = -2\varepsilon_1 - 1,\quad
	\frac{\partial}{\partial s}\log\tau_{11}(z) = -2u_1,
\]
a Laplace transformation
\[
	\frac{\partial}{\partial z}\Psi^*_{11}(z) \to \zeta\Phi_{11}(\zeta^{-1}),\quad
	z\Psi^*_{11}(z) \to -\frac{\partial}{\partial \zeta}\Phi_{11}(\zeta^{-1}),
\]
and a M\"{o}bius transformation $\zeta\to z^{-1}$.
Then we obtain
\begin{equation}\label{Eq:Lax_mat11_afterLaplace}
	z\frac{\partial}{\partial z}\Phi_{11}(z) = -\Phi_{11}(z)N_{11}(z),\quad
	\frac{\partial}{\partial s}\Phi_{11}(z) = -\Phi_{11}(z)C_{11}(z),
\end{equation}
with
\[\begin{split}
	N_{11}(z) &= (M_{11,0}+2\varepsilon_1I_{11})(I_{11}+zM_{11,1})^{-1},\\
	C_{11}(z) &= B_{11,0} + 2u_1I_{11} + z(M_{11,0}+2\varepsilon_1I_{11})(I_{11}+zM_{11,1})^{-1}B_{11,1},
\end{split}\]
where $M_{11}(z)=M_{11,0}+zM_{11,1}$ and $B_{11}(z)=B_{11,0}+zB_{11,1}$.
Note that $(M_{11,1})^2=O$, namely, $(I_{11}+zM_{11,1})^{-1}=I_{11}-zM_{11,1}$.
The 11-th rows of $N_{11}(z)$ and $C_{11}(z)$ are both equivalent to the zero vectors.
Hence we can reduce the system \eqref{Eq:Lax_mat11_afterLaplace} to the one associated with $\mathfrak{so}_{10}[z,z^{-1}]$.

Furthermore, we consider a Dynkin diagram automorphism
\[
	e_i \to e_{5-i},\quad
	h_i \to h_{5-i}\quad (i=0,\ldots,5),\quad
	z\frac{\partial}{\partial z} \to z\frac{\partial}{\partial z} + \frac{1}{2}\sum_{i=1}^{5}h_i.
\]
We finally obtain
\begin{equation}\label{Eq:Lax_so10}
	z\frac{\partial}{\partial z}\Psi_{10}(z) = M_{10}(z)\Psi_{10}(z),\quad
	\frac{\partial}{\partial s}\Psi_{10}(z) = B_{10}(z)\Psi_{10}(z).
\end{equation}
The matrix $M_{10}(z)\in\mathfrak{so}_{10}[z,z^{-1}]$ is described as
\[\begin{split}
	M_{10}(z) &= \sum_{i=1}^{5}(-\varepsilon_{7-i}-\frac{1}{2})h_i + \sum_{i=0}^{4}\varphi_{6-i}e_i + \{(q_i-s)p_1-\alpha_1\}e_5\\
	&\quad + e_{0,2} + e_{1,2} + e_{2,3} + e_{3,4} + (q_1-s)e_{3,5} - e_{5,3,4}.
\end{split}\]
The matrix $B_{10}(z)\in\mathfrak{so}_{10}[z,z^{-1}]$ is described as
\[\begin{split}
	B_{10}(z) &= \sum_{i=1}^{5}u'_ih_i + \sum_{i=0}^{5}v'_ie_i + v'_6e_{0,2} + v'_7e_{1,2} + v'_8e_{2,3} + v'_9e_{3,4}\\
	&\quad + v'_{10}e_{3,5} + v'_{11}e_{2,3,4} + v'_{12}e_{2,3,5} + v'_{13}e_{5,3,4} + v'_{14}e_{5,2,3,4},
\end{split}\]
where the coefficients $u'_i,v'_i$ are polynomials in $(q_1,q_2,p_1,p_2)$; we do not give their explicit formulas here.

\subsection{From $\mathfrak{so}_{10}[z,z^{-1}]$ to $\mathfrak{so}_8[z,z^{-1}]$}

Similarly as in the previous section, the system \eqref{Eq:Lax_so10} can be reduced to the one associated with $\mathfrak{so}_8[z,z^{-1}]$.
Furthermore, we consider a gauge transformation
\[
	\Psi_8(z) \to \exp(-\frac{1}{\varphi_5}e_1)\exp(-e_4)\exp(h_1\log\varphi_5)\Psi_8(z),
\]
the B\"{a}cklund transformation for the Sasano system
\[
	p_2 \to p_2 + \frac{\alpha_5}{1-q_2},\quad
	\alpha_4 \to \alpha_4 + \alpha_5,\quad
	\alpha_5 \to -\alpha_5,\quad
	\alpha_0 \to \alpha_0 + \alpha_5,
\]
and a Dynkin diagram automorphism $e_1\leftrightarrow e_4$, $h_1\leftrightarrow h_4$.
We finally obtain
\begin{equation}\label{Eq:Lax_so8}
	z\frac{\partial}{\partial z}\Psi_8(z) = M_8(z)\Psi_8(z),\quad
	\frac{\partial}{\partial s}\Psi_8(z) = B_8(z)\Psi_8(z).
\end{equation}
The matrix $M_8(z)\in\mathfrak{so}_8[z,z^{-1}]$ is described as
\[\begin{split}
	M_8(z) &= \sum_{i=1}^{4}\varepsilon''_{i+1}h_i + \sum_{i=0}^{4}\varphi''_ie_i + q_1e_{0,2} + (s-q_2)e_{1,2} + e_{2,3}\\
	&\quad + (1-q_1)e_{2,4} + se_{0,1,2} + e_{0,2,3} + (1-s)e_{1,2,4} + e_{3,2,4},
\end{split}\]
where
\[\begin{split}
	&\varepsilon''_1 = \frac{1}{2}(\alpha_2+2\alpha_3+3\alpha_4+\alpha_5+2\alpha_6-2),\quad
	\varepsilon''_2 = \frac{1}{2}(-\alpha_2-2\alpha_3-\alpha_4-\alpha_5),\\
	&\varepsilon''_3 = \frac{1}{2}(-\alpha_2-\alpha_4-\alpha_5),\quad
	\varepsilon''_4 = \frac{1}{2}(\alpha_2-\alpha_4-\alpha_5),
\end{split}\]
and
\[\begin{split}
	&\varphi''_0 = q_2p_2 + \alpha_4,\quad
	\varphi''_1 = (q_1-s)p_1 + \alpha_0 + \alpha_2,\\
	&\varphi''_2 = q_1 - q_2,\quad
	\varphi''_3 = p_1,\quad
	\varphi''_4 = (q_2-1)p_2 + \alpha_4.
\end{split}\]
The matrix $B_8(z)\in\mathfrak{so}_8[z,z^{-1}]$ is described as
\[\begin{split}
	B_8(z) &= \sum_{i=1}^{4}u''_ih_i + \sum_{i=0}^{4}v''_ie_i + v''_5e_{0,2} + v''_6e_{1,2} + v''_7e_{2,3}\\
	&\quad + v''_8e_{2,4} + v''_9e_{0,2,1} + v''_{10}e_{0,2,3} + v''_{11}e_{1,2,4} + v''_{12}e_{3,2,4},
\end{split}\]
where the coefficients $u''_i,v''_i$ are polynomials in $(q_1,q_2,p_1,p_2)$; we do not give their explicit formulas here.

\subsection{From $\mathfrak{so}_8[z,z^{-1}]$ to $\mathfrak{sl}_4$}

Similarly as in the previous section, the system \eqref{Eq:Lax_so8} can be reduced to a Fuchsian system
\begin{equation}\label{Eq:Lax_so7}
	z\frac{\partial}{\partial z}\Psi_7(z) = M_7(z)\Psi_7(z),\quad
	\frac{\partial}{\partial s}\Psi_7(z) = B_7(z)\Psi_7(z),
\end{equation}
with $7\times7$ matrices $M_7(z)=M_{7,0}+zM_{7,1}$ and $B_7(z)=B_{7,0}+zB_{7,1}$.
Then we have $\det(I_7+zM_{7,1})=(z-s)(z-s+1)$.
It follows that the system \eqref{Eq:Lax_so7} can be reduced to a Fuchsian one with $6\times6$ matrices.
It is described as
\begin{equation}\label{Eq:Lax_so6}
	z\frac{\partial}{\partial z}\Psi_6(z) = M_6(z)\Psi_6(z),\quad
	\frac{\partial}{\partial s}\Psi_6(z) = B_6(z)\Psi_6(z),
\end{equation}
with
\[
	M_6(z) = \frac{M_{6,0}+zM_{6,1}+z^2M_{6,2}}{(z-s)(z-s+1)},\quad
	B_6(z) = \frac{B_{6,0}+zB_{6,1}+z^2B_{6,2}}{(z-s)(z-s+1)},
\]
where $M_{6,i},B_{6,i}\in\mathfrak{so}_6$ $(i=0,1,2)$; we do not give their explicit formulas here.

Recall that the algebra $\mathfrak{so}_6$ is isomorphic to the one $\mathfrak{sl}_4$.
With the aid of this fact, we can reduce the system \eqref{Eq:Lax_so6} to a Fuchsian system of fourth order.
Furthermore, we consider a transformation of independent and dependent variables
\[
	x = \frac{z}{s},\quad t = 1 - \frac{1}{s},\quad \lambda_i = 1 - \frac{q_i}{s},\quad \mu_i = -sp_i\quad (i=1,2).
\]
Note that it arises from the transformation \eqref{Eq:DepVarTransf_DStoMPD} of the case $n=2$.
We finally obtain
\begin{equation}\label{Eq:Lax_sl4}
	\frac{\partial}{\partial x}\Psi_4(x) = M_4(x)\Psi_4(x),\quad
	\frac{\partial}{\partial t}\Psi_4(x) = B_4(x)\Psi_4(x),
\end{equation}
with
\[
	M_4(x) = \frac{M_{4,t}}{x-t} + \frac{M_{4,1}}{x-1} + \frac{M_{4,0}}{x},\quad
	B_4(x) = -\frac{M_{4,t}}{x-t} + B_{4,\infty}.
\]
Its compatibility condition implies the system \eqref{Eq:CP6_MPD}.
We do not give explicit formulas of residue matrices here.

By a direct computation, we arrive at
\begin{thm}
The system \eqref{Eq:Lax_sl4} is a Fuchsian system with a spectral type $\{31,22,22,1111\}$.
\end{thm}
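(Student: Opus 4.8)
The spectral type records, at each singular point, the partition giving the multiplicities of the eigenvalues of the corresponding residue matrix, with each residue semisimple so that no Jordan refinement is needed. Thus the plan is to exhibit, for the four residue matrices $M_{4,t}$, $M_{4,1}$, $M_{4,0}$ and $M_{4,\infty}=-(M_{4,t}+M_{4,1}+M_{4,0})$ of the system \eqref{Eq:Lax_sl4}, the factorizations of their characteristic polynomials and to confirm that each matrix is diagonalizable. Concretely, I expect to find that the four characteristic polynomials factor as $(\lambda-a)^{3}(\lambda-b)$ at one point (partition $31$), as $(\lambda-c)^{2}(\lambda-d)^{2}$ at two points (partition $22$ at each), and as a product of four distinct linear factors at the remaining point (partition $1111$). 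The eigenvalues $a,b,c,d,\dots$ should come out as explicit affine-linear combinations of the fixed parameters $\alpha_{0},\dots,\alpha_{6}$, and the Fuchsian relation then serves as a consistency check.

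First I would make the four residue matrices explicit. They are produced by the reduction of the previous subsections: starting from $M_{8}(z)$, passing to the $7\times7$ system \eqref{Eq:Lax_so7}, using $\det(I_{7}+zM_{7,1})=(z-s)(z-s+1)$ to drop to the $6\times6$ system \eqref{Eq:Lax_so6}, applying the isomorphism $\mathfrak{so}_{6}\cong\mathfrak{sl}_{4}$, and finally performing the change of variables $x=z/s$. In the $z\partial_{z}$ form the coefficient $z^{-1}M_{6}(z)$ has simple poles at $z=0,s,s-1,\infty$, and the substitution $x=z/s$ carries these to $x=0,1,t,\infty$ with $t=1-1/s$, so that $M_{4}(x)$ acquires exactly the simple-pole structure displayed in \eqref{Eq:Lax_sl4}. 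Once $M_{4,t}$, $M_{4,1}$, $M_{4,0}$ are written in terms of $(\lambda_{1},\lambda_{2},\mu_{1},\mu_{2})$ and the parameters, computing their $4\times4$ characteristic determinants and reading off the repeated roots is a finite, routine calculation; this is the content of the phrase \emph{by a direct computation}.

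Second, at the two points carrying a repeated eigenvalue of multiplicity $2$ and at the point carrying the eigenvalue of multiplicity $3$, I would verify semisimplicity by checking that $\operatorname{rank}(M_{4,\xi}-aI_{4})=4-m_{a}$ for the relevant eigenvalue $a$ of multiplicity $m_{a}$; equivalently, that the minimal polynomial of each residue has only simple roots. This rules out any nilpotent part and guarantees that the correct symbol is the bare partition $31$ (resp. $22$) rather than a Jordan-refined one. Since \eqref{Eq:Lax_sl4} is obtained from a chain of gauge, Laplace and B\"{a}cklund transformations of systems whose residues are semisimple, I expect diagonalizability to persist, but it must still be confirmed directly on the final matrices.

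The main obstacle is bookkeeping rather than anything conceptual. The explicit residue matrices emerge only after a long sequence of operations---two Laplace transformations with their gauge factors $\tau_{12},\tau_{11}$, Dynkin-diagram automorphisms, a B\"{a}cklund transformation of the Sasano system, the reductions in rank from $12$ down to $4$, and the exceptional isomorphism $\mathfrak{so}_{6}\cong\mathfrak{sl}_{4}$---and one must carry the dependence on $(q_{1},q_{2},p_{1},p_{2})$ and on the $\alpha_{j}$ faithfully through each step so that the poles land precisely at $x=t,1,0,\infty$. Once that is done correctly the eigenvalue computation itself is elementary; as an independent cross-check, one may instead track the local exponents through the chain using the known action of the Laplace transformation (middle convolution) on spectral data, which should reproduce the same partitions $\{31,22,22,1111\}$.
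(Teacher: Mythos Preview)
Your proposal is correct and is essentially the same approach as the paper's: the paper states only that the result follows ``by a direct computation,'' and what you describe---making the residue matrices $M_{4,\xi}$ explicit after the chain of reductions, reading off the eigenvalue multiplicities from their characteristic polynomials, and checking diagonalizability---is precisely the content of that phrase. Your additional remark about cross-checking via the known action of the Laplace transformation on local exponents is a helpful sanity check but goes slightly beyond what the paper records.
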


\begin{rem}
The system \eqref{Eq:Lax_sl4} can be transformed into the one \eqref{Eq:Sch_Lax} of the case $n=2$ via certain gauge transformation and B\"{a}cklund transformation.
\end{rem}

\section*{Acknowledgement}
The authers are grateful to Professors Masatoshi Noumi, Hidetaka Sakai and Shintarou Yanagida for valuable discussions and advices.


\end{document}